\newtheorem{theorem}{Theorem}[section]
\newtheorem{proposition}{Proposition}[section]
\theoremstyle{definition}
\theoremstyle{remark}
\numberwithin{equation}{section}
\renewcommand{\a}{\alpha}
\newcommand{\ep}{\epsilon}
\newcommand{\vep}{\varepsilon}
\def\Za{\mathbf Z}
\def\Zp{\bf Z}  
\def\a{\alpha}
\begin{document}

\title[Bosonic realization of toroidal Lie algebras]
{Bosonic realization of toroidal Lie algebras of classical types}
\author{Naihuan Jing$^*$}
\address{Department of Mathematics,
   North Carolina State University,
   Raleigh, NC 27695-8205, USA}
\email{jing@math.ncsu.edu}
\author{Kailash C. Misra}
\address{Misra: Department of Mathematics,
   North Carolina State University,
   Raleigh, NC 27695-8205, USA}
\email{misra@math.ncsu.edu}
\author{Chongbin Xu}
\address{Xu: School of Mathematics and Information Science,
Wenzhou University, Wenzhou 325035, China}
\email{xuchongbin1977@126.com}
\thanks{$*$ Corresponding author.}
\thanks{Jing acknowledges the support of NSA grant H98230-06-1-0083 and NSFC grant
10728102, and Misra acknowledges the support of NSA grant
H98230-08-0080.} \keywords{Toroidal algebras, Weyl algebras, vertex
operators, representations} \subjclass[2000]{Primary: 17B60, 17B67,
17B69. Secondary: 17A45, 81R10}

\begin{abstract}
Generalizing Feingold-Frenkel's construction we use Weyl bosonic
fields to construct toroidal Lie algebras of types $A_n, B_n$, $C_n$
and $D_n$ of level $-1, -2, -1/2$ and $-2$ respectively. In
particular, our construction also gives new bosonic construction for
the orthogonal Lie algebras in the cases of affine Lie algebras.
\end{abstract}

\maketitle

\section{Introduction} \label{S:intro}

Toroidal Lie algebras are natural generalization of the affine
Kac-Moody algebras \cite{MRY} that enjoy many similar interesting
features. Let $\mathfrak{g}$ be a finite-dimensional complex simple
Lie algebra of type $X_n$, and $R=\mathbb{C}[s,s^{-1},t,t^{-1}]$ be
the ring of Laurent polynomials in commuting variables $s$ and $t$.
By definition a 2-toroidal Lie algebra of type $X_n$ is a perfect
central extension of the iterated loop algebra $\mathfrak{g}\otimes
R$, and can be realized as certain homomorphic image of the
universal central extension $T(X_n)=(g\otimes R)\oplus \Omega_R/dR$,
where $\Omega_R/dR$ is K\"ahler differentials of $R$ modulo the
exact forms. The center contains two special elements $c_0, c_1$. A
module of $T(X_n)$ is called a level-$(k_0,k_1)$ module if the
standard pair of central elements $(c_0,c_1)$ acts as $(k_0,k_1)$
for some complex numbers $k_0$ and $k_1$. In this work we will focus
on modules with $k_0\neq 0$.

Two-toroidal Lie algebras resemble affine Lie algebras in many
aspects. Their main source of representations are vertex operators
(cf. \cite{MRY, B, BBS}). In \cite{T} the toroidal Lie algebra of
type $B_n$ was constructed by fermionic operators (see also
\cite{JMg}), and in \cite{G} two constructions were given for the
extended affine Lie algebras of type $A_n$. In \cite{FJW} the first
author and collaborators used McKay correspondence to realize level
one representation of toroidal Lie algebras of simply laced types.
In the recent paper \cite{JM} a unified fermionic construction of
toroidal Lie algebras of classical types was given by extending
Feingold-Frenkel realization coupled with new ghost fields. We
pointed out there that the fermionic construction can not be
directly generalized to symplectic toroidal Lie algebras as one
needs bosons to realize the long root vector $2\vep_n$.

In this paper we give a unified Weyl bosonic construction of all
classical toroidal Lie algebras. Our work in types $C$ and $A$ are
toroidal analog of Feingold-Frenkel construction \cite{FF} for the
affine Lie algebras (see also \cite{L} for a recent treatment of
Feingold-Frenkel construction). We also construct bosonic
realizations for types $D$ and $B$. This includes, as special cases,
new constructions (at level $-2$) for orthogonal affine Lie
algebras. The main idea is again similar to that of \cite{JM} to
construct certain operators corresponding to the special nodes in
the affine Dynkin diagrams. This novelty is special for the cases of
toroidal Lie algebras. In a sense we have also obtained field
operators corresponding to imaginary root vectors.

The Weyl bosonic construction for orthogonal toroidal Lie algebras
is not directly obtained by mimicking that of type $A$ or $C$ and we
have to use new embedding of the Lie algebras into type $A$ to
achieve the goal, this partly explains why this construction has
been missed in previous available constructions of affine Lie
algebras. Like the Neveu-Schwarz and Ramond fields in the fermionic
case, our field operators have two forms indexed by half-integers
and integers respectively.

 The structure of the paper is as follows. In section 2 we define
the toroidal Lie algebra, and state MRY-presentation \cite{MRY} of
the toroidal algebra in terms of generators and relations. In
section 3 we start with a finite rank lattice with an anti-symmetric
bilinear form and define a Fock space and some bosonic field
(vertex) operators, which in turn give level $-1$ representations of
the toroidal Lie algebras of type $A_n$, level $-1/2$ modules for
type $C$, and level $-2$ modules for types $B_n, D_n$. The proof is
an extensive analysis of the operator product expansions for the
field operators. We also include the verification of the Serre
relations.

\section{Toroidal Lie Algebras}

A special quotient algebra of the toroidal Lie algebra $T(X_n)$ is
the double affine algebra, denoted by $T_0(X_n)$, that is the
toroidal Lie algebra of type $X_n$ with a two dimensional center.
The double affine algebra is the quotient of $T(X_n)$ modulo all the
central elements with degree other than zero. In fact, $T_0(X_n)$
has the following realization
$$
T_0(X_n)=(\mathfrak{g}\otimes R)\oplus \mathbb{C}c_0\oplus
\mathbb{C}c_1,
$$
where $R$ is a ring and the Lie product is
$$
[x\otimes g_1, y\otimes g_2]=[x,y]\otimes g_1g_2+\Phi(g_2\partial_sg_1)c_0
+\Phi(g_2\partial_tg_1)c_1
$$
for all $x,y\in \mathfrak{g}$, $g_1,g_2\in R$, where $\Phi$ is a
linear functional on $R$  defined by $\Phi(s^kt^m)=0$, if
$(k,m)\not= (0,0)$ and $\Phi(s^kt^m)=1$, if $(k,m)=(0,0)$ for all
$k,m\in \mathbb{Z}$.

For our purpose we will need the formal power series in variables
$z,w$. In particular we will need the
formal delta functions $\delta(z-w)=\sum_{n\in\mathbb Z}z^{-n-1}w^n$.
This could be understood as follows.
\begin{align*}
\delta(z-w)&=\iota_{z, w}((z-w)^{-1})+\iota_{w, z}((w-z)^{-1}),\\
\partial_w\delta(z-w)&=
\iota_{z, w}((z-w)^{-2})-\iota_{w, z}((w-z)^{-2}),
\end{align*}
where $\iota_{z, w}$ means expansion when $|z|>|w|$. For simplicity
in the following we will drop $\iota_{z, w}$ if it is
clear from the context.

For $n\geq 1$ let $(a_{ij})_{{n+1}\times {n+1}}$ be the generalized Cartan matrix of the
affine algebra $X_n^{(1)}$, and $Q:=
\mathbb{Z}\alpha_0 \oplus \mathbb{Z}\alpha_1  \oplus \cdots \oplus \mathbb{Z}\alpha_n$ its root lattice. The
toroidal Lie algebra $T(X_n)$ \cite{MRY} is the Lie algebra generated
by $\not c$, $\alpha_i(k)$,  and
$x_k(\pm\alpha_i)$ for $i=0,1, \cdots , n$, $k\in \mathbb{Z}$ with the
following relations:

   $(R0)$ $[{\not c}, \alpha_i(z)]=0=[{\not c}, x(\pm\alpha_i,z)];$

   $(R1)$ $[\alpha_i(z),\alpha_j(w)]=
(\a_i | \a_j)\partial_w\delta(z-w){\not c};$

   $(R2)$ $[\alpha_i(z), x(\pm\alpha_j, w)]=
\pm (\a_i | \a_j)x(\pm\alpha_j,w)\delta (z-w);$

   $(R3)$ $[x(\alpha_i,z),x(-\alpha_j,w)]=
\delta_{ij}\frac{2}{(\a_i | \a_j)}\left\{ \alpha_i(w)\delta(z-w)+
\partial_w\delta(z-w){\not c}\right\};$

   $(R4)$ $[x(\alpha_i,z),x(\alpha_i,w)]=0=
[x(-\alpha_i,z),x(-\alpha_i,w)];$

  $(S1)$ $\mathrm{ad}x(\pm\alpha_i,z_1)x(\pm\alpha_j,z_2)=0,
\quad\mbox{for $a_{ij}=0$}$

  $(S2)$ $(\mathrm{ad}x(\pm\alpha_i,z_1))(\mathrm{ad}x(\pm\alpha_i,z_2))
   x(\pm\alpha_j,z_3)=0,$ if $a_{ij}=-1$

  $(S3)$ $(\mathrm{ad}x(\pm\alpha_i,z_1))(\mathrm{ad}x(\pm\alpha_i,z_2))
(\mathrm{ad}x(\pm\alpha_i,z_3))x(\pm\alpha_j,z_4)=0,$ if
$a_{ij}=-2.$ where we have used the follow power series:
 $$
 \alpha_i(z)=\sum_{n\in \mathbb{Z}}\alpha_i(n)z^{-n-1},
 $$
 $$
 x(\pm \alpha_i,z)=\sum_{n\in \mathbb{Z}}x_n(\pm\alpha_i)z^{-n-1}.
 $$
We remark that the 2-toroidal Lie algebra of type $X_n$ includes two
affine Lie algebras of type $X_n^{(1)}$ as subalgebras.

\section{Representations of the Toroidal Algebra}
In this section we give two unified bosonic realizations for the
toroidal Lie algebra of classical types $A_{n-1}, B_n, C_n$ and
$D_n$ using bosonic analogs of Neveu-Schwarz and Ramond fermioninc
fields.

Let $\vep_i$ $(i=1, \ldots, n+1$) be a set of orthonormal basis of
the vector space $\mathbb C^{n+1}$ equipped with the inner product
$( \ | \ )$ such that
\begin{equation}
(\vep_i|\vep_j)=\delta_{ij},
\end{equation}
Let $P_0=\mathbb Z\vep_1\oplus \cdots \oplus \mathbb Z\vep_{n}$ be a
sublattice of rank $n$, and let $\vep_0'$ be an external unit vector
orthogonal to $\vep_i$ and introduce
$\overline{c}=\frac1{\sqrt{2}}(\vep_0'+i\vep_{n+1})$ correspond to
the null vector $\delta$ and $\overline{d}=
\frac1{\sqrt{2}}(\vep_0'-i\vep_{n+1})$ be the dual gradation
operator. Then
\begin{align}
(\overline{c}|\overline{c})&=(\overline{c}|\vep_i)=0\\
(\overline{d}|\overline{d})&=(\overline{d}|\vep_i)=0, \qquad
(\overline{c}|\overline{d})=1
\end{align}
for $i=1, \ldots, n$.

The simple roots for the classical finite dimensional
Lie algebras can be realized
simply by defining the simple roots as follows:

\medskip
$\a_1=\vep_1-\vep_2$, $\cdots$, $\a_{n-1}=\vep_{n-1}-\vep_n$; for $A_{n-1}$.

$\a_1=\vep_1-\vep_2$, $\cdots$, $\a_{n-1}=\vep_{n-1}-\vep_n$,
$\a_n=\vep_n$; for $B_n$.

$\a_1=\frac1{\sqrt 2}(\vep_1-\vep_2)$, $\cdots$,
$\a_{n-1}=\frac1{\sqrt 2}(\vep_{n-1}-\vep_n)$,
$\a_n=\sqrt{2}\vep_n$; for $C_n$.

$\a_1=\vep_1-\vep_2$, $\cdots$, $\a_{n-1}=\vep_{n-1}-\vep_n$,
$\a_n=\vep_{n-1}+\vep_n$; for $D_n$.

\medskip

Then the set of positive roots are:

$$
\Delta_+=\begin{cases} \{\vep_i-\vep_j|1\le i<j \le n\}, & \text{Type $A_{n-1}$}\\
\{\vep_i, \vep_i\pm\vep_j|1 \le i<j \le n\}, & \text{Type $B_n$}\\
\{\sqrt{2}\vep_i, \frac1{\sqrt 2}(\vep_i\pm\vep_j)|1 \le i<j \le n\}, & \text{Type $C_n$}\\
\{\vep_i\pm\vep_j|1 \le i<j \le n\}, & \text{Type $D_n$}.
\end{cases}
$$

The highest (long) root $\a_{max}$ for each type is given as follows:

$$
\a_{max}=\begin{cases} \vep_1-\vep_n, & \text{Type $A_{n-1}$}
\\
\vep_1+\vep_2, & \text{Type $B_n$ or $D_n$}
\\
\sqrt 2\vep_1 & \text{Type $C_n$}
\end{cases}.
$$

We further introduce the element
$$\a_0=\overline{c}-\a_{max}$$
in the lattice and then define $\beta=-\overline{c}+\vep_1$ for type
$ABD$, and $\beta=-\sqrt2\overline{c}+\vep_1$ for type $C$. Then we
have
$$
\a_0=\vep_{n}-\beta\ \ \text{for $A_{n-1}$}; \quad -\beta-\vep_2 \ \
\text{for $B_n, D_n$}; \quad \text{or} -\frac1{\sqrt
2}(\beta+{\vep_1}) \ \ \text{for $C_n$}.
$$
Note that $(\beta|\beta)=1, (\beta|\vep_i)=\delta_{1i}$.

 Then
$P=\mathbb Z\overline{c}\oplus\mathbb Z\vep_1\oplus \cdots \oplus
\mathbb Z \vep_n$ and $Q=\mathbb Z\overline{c}\oplus\mathbb
Z\a_1\oplus \cdots \oplus \mathbb Z \a_k= \mathbb Z\a_0\oplus \cdots
\oplus \mathbb Z \a_k$ ($k= n-1$ for type $A_{n-1}$ and $k= n$ for
types $B_nC_nD_n$) are the weight lattice and root lattice for the
corresponding affine Lie algebra, and $P_0=\mathbb Z\vep_1\oplus
\cdots \oplus \mathbb Z \vep_n$ and $Q_0=\mathbb Z\a_1\oplus \cdots
\oplus  \mathbb Z\a_k$ ($k= n-1$ for type $A_{n-1}$ and $k= n$ for
types $B_nC_nD_n$) are the weight lattice and root lattice for the
simple Lie algebras $A_{n-1}, B_n, C_n, D_n$. Then
$(\a_i|\a_j)=d_ia_{ij}$, where $a_{ij}$ are the entries of the
affine Cartan matrix of type $(ABCD)^{(1)}$, and the $d_i$'s are
given by:
$$
(d_0, d_1, \cdots, d_k) = \begin{cases} \{1, 1, \cdots, 1,1\}, &  k=n-1,\text{Type $A_{n-1}$}\\
\{1, 1, \cdots, 1,\frac{1}{2}\}, &  k=n,\text{Type $B_n$}\\
\{1, \frac{1}{2}, \cdots, \frac{1}{2},1\}, & k=n,\text{Type $C_n$}\\
\{1, 1, \cdots, 1,1\}, &  k=n,\text{Type $D_n$}.
\end{cases}
$$
Note that the bilinear form $(\ \ | \ \ )$ is non-degenerate on the
affine root lattice or the span of $\{\beta\}\cup \{\alpha_i\}$
($i\geq 1$).

We introduce infinite dimensional Weyl algebras as follows. Let $\bf
Z=\mathbb Z$ or ${\bf Z}=\mathbb Z+1/2$. Let $P_{\mathbb
C}={P}\otimes\mathbb C$ be the $\mathbb C$-vector space spanned by
$\overline c$ and $\vep_i, 1 \le i \le n$ in various types. To
combine all types we will also introduce another set of orthonormal
vectors $\vep_{\overline i}, 1 \le i \le n+1$ such that
\begin{equation}
(\vep_{\overline i}|\vep_{\overline j})=\delta_{ij},
\end{equation}
and we denote by $\overline{P}_{\mathbb C}$ the $\mathbb C$-space
spanned by $\vep_{\overline i}$. We also define
$\overline{\beta}=-\overline{c}+\vep_{\overline 1}$, and
$\alpha_{\overline 0}=\overline{c}-\alpha_{\overline{max}}$, where
$\alpha_{\overline{max}}=\vep_{\overline 1}+\vep_{\overline 2}$. The
special vectors are only needed in types $B_n, D_n$ and
$\vep_{\overline{n+1}}$ is only needed for type $B_n$. We define
$\mathcal C=\mathcal C_1\oplus \mathcal C_2$, where both subspaces
$\mathcal C_1=P_{\mathbb C}\oplus \overline{P}_{\mathbb C}$ and
$\mathcal C_2=P_{\mathbb C}^*\oplus \overline{P}_{\mathbb C}^*$ are
maximal isotropic subspaces, where we define $<P, \overline{P}>=0$.
We define the natural anti-symmetric bilinear form on $\mathcal C$
by
\begin{equation}
\langle b^*, a\rangle=-\langle a, b^*\rangle=(a|b),
\quad \langle a, b\rangle=\langle a^*, b^*\rangle=0, \qquad a, b
\in \mathcal C_1
\end{equation}

The Weyl algebra $W(P)$ is the unital algebra generated by the
elements $a(k)$ and $a^*(k)$, where $a \in\mathcal C_1$,
$a^*\in \mathcal C_2$, and $k\in \Za$
subject to the relations:
$$
[u(k), v(l)]=\langle u, v\rangle\delta_{k, -l}, \qquad u, v\in
\mathcal C.
$$

The representation space is the infinite dimensional vector space
$$\displaystyle
V=\bigotimes_{a_i}\left(\bigotimes_{k\in\Zp_+} \mathbb C[a_i(-k)]
\bigotimes_{k\in\Zp_+} \mathbb C[a_i^*(-k)]\right)
$$
where $a_i$ runs through any basis in $P$ and $\overline P$, say
$\overline{c}$, $\vep_i$'s and $\vep_{\overline i}$'s.

The Weyl algebra acts on the space $V$ by the usual action: $a(-k)$
acts as an creation operator and $a(k)$ as an annihilation operator.
We remark that using $-\langle\ , \rangle$ as the form on $\mathcal
C$ is equivalent to switching $\mathcal C_1$ and $\mathcal C_2$.
As the symplectic form $\langle \ \ , \ \ \rangle$ is non-degenerate, the Fock space $V$ is an irreducible
module for the Weyl algebra.

For any bosonic fields
\begin{equation*}
u(z)=\sum_{n\in\Za}u(n)z^{-n-1/2}, \quad
v(z)=\sum_{n\in\Za}v(n)z^{-n-1/2} \end{equation*} we define the
normal ordering $:u(z)v(w):$ by swapping their components:
\begin{equation}
:u(m)v(n):=\begin{cases} u(m)v(n) & m<0\\
\frac12(u(0)v(n)+v(n)u(0)) & m=0\\
v(n)u(m) & m>0\end{cases}.
\end{equation}
It then follows that $:u(z)v(w): \,= :v(w)u(z):$.

Based on the normal product of two fields, we can define the normal
product of $n$ fields inductively as follows. In fact we define that

\begin{equation*}
:u_1(z_1)u_2(z_2)\cdots u_n(z_n):=:u_1(z_1)(:u_2(z_2)\cdots
u_n(z_n):):,
\end{equation*}
and then use induction till we reach two fields.

We define the contraction of two states by
$$
\underbrace{a(z)b(w)}=a(z)b(w)-:a(z)b(w):,
$$
which contains all poles for $a(z)b(w)$. In general, the contraction
of several pairs of states is given inductively by
the following rule.

\begin{theorem} \label{Wick} (Wick's theorem \cite{FF, K}) For elements $v_1, \ldots, v_n$
we have
\begin{align*}
&:u_1\cdots u_m::v_1\cdots v_n:=:u_1\cdots u_mv_1\cdots v_n:+\\
&\sum_{i_1<\cdots<i_s}\underbrace{u_{i_1}v_{j_1}}\cdots
\underbrace{u_{i_s}v_{j_s}} :u_1\cdots \hat{u}_{i_1}\cdots
\hat{u}_{i_s}\cdots u_m v_1\cdots \hat{v}_{j_1}\cdots
\hat{v}_{j_s}\cdots v_n:
\end{align*}
where the sum runs over all possible contractions of some $u_i$'s
and some $v_i$'s.
\end{theorem}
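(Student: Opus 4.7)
The plan is to prove Wick's theorem by induction on $m$, the number of fields appearing in the first normal-ordered product.

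\emph{Base case $m=1$.} It suffices to establish the identity
$$u_1 :v_1\cdots v_n: \;=\; :u_1 v_1\cdots v_n: \;+\; \sum_{j=1}^{n} \underbrace{u_1\, v_j}\,:v_1\cdots \hat{v}_j\cdots v_n:.$$
To prove this, I would decompose $u_1 = u_1^{-} + u_1^{+}$ into its creation modes (those $u_1(k)$ with $k<0$, together with $\tfrac12 u_1(0)$) and its annihilation modes (those with $k>0$, together with $\tfrac12 u_1(0)$); this split is compatible with the symmetrized zero-mode convention in the definition of $:u(0)v(n):$. Since $u_1^{-}$ already sits to the left of every mode appearing in $:v_1\cdots v_n:$, it contributes the ``$u_1^{-}$ on the left'' portion of $:u_1 v_1\cdots v_n:$. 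The annihilation piece $u_1^{+}$ must be commuted through each factor; the scalar commutators $[u_1^{+}, v_j]$, combined with the symmetric pieces arising from $u_1^{-}$, sum to exactly $\underbrace{u_1 v_j}$ times the remaining normal product, by the very definition of the contraction given in the excerpt.

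\emph{Inductive step.} Assume the theorem holds when the first product has fewer than $m$ fields. Using associativity of the normal ordering, write $:u_1\cdots u_m: \,=\, :u_1\,(:u_2\cdots u_m:):$ and apply the base case with the single field $u_1$ and the normal product $:u_2\cdots u_m v_1\cdots v_n:$. Cross-contractions $\underbrace{u_1 u_i}$ are absorbed by the outer normal ordering $:u_1 u_2\cdots u_m:$ and drop out, so only terms of the form $\underbrace{u_1 v_j}$ survive. After regrouping, one is left with expressions of the form $:u_2\cdots u_m:\,:v_1\cdots \hat{v}_j\cdots v_n:$ (and their already-contracted variants), each multiplied by at most one factor $\underbrace{u_1 v_j}$. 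Applying the inductive hypothesis to each such product expands it into all remaining partial contractions between $\{u_2,\ldots,u_m\}$ and the surviving $v$-fields. Concatenating a possible $u_1$-contraction with these produces every admissible contraction pattern on the right-hand side exactly once.

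\emph{Main obstacle.} The delicate part is the combinatorial bookkeeping in the inductive step: verifying that every partial contraction pattern arises with coefficient exactly one, and that contractions internal to $\{u_i\}$ (or to $\{v_j\}$) are correctly eliminated by the outer normal orderings. A secondary technical nuisance is the consistent handling of the zero-mode symmetrization in $:u(0)v(n):$; distributing $\tfrac12 u(0)$ into both $u^{-}$ and $u^{+}$ makes the recursion compatible with the symmetrized definition and propagates cleanly through the induction. Once these two points are secured, the remaining content of the theorem is formal and matches the standard proofs in \cite{FF, K}.
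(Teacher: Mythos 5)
The paper itself offers no proof of this statement: it is quoted as a classical result with a pointer to \cite{FF, K}, so the only meaningful comparison is with the standard argument in those references, which is indeed the induction on $m$ that you set up. Your base case is correct, including the point that the symmetrized zero mode must be split half-and-half between $u_1^{-}$ and $u_1^{+}$ so that the scalar produced by commuting $u_1^{+}$ through $v_j$ is exactly the contraction $\underbrace{u_1 v_j}$ as defined in the text.

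The inductive step as you describe it, however, has a genuine gap. Applying the base case to the single field $u_1$ against $:u_2\cdots u_m v_1\cdots v_n:$ unavoidably generates the cross-contractions $\underbrace{u_1 u_i}$ for $i\ge 2$, and the assertion that these ``are absorbed by the outer normal ordering and drop out'' is exactly the claim that needs proof. Worse, in the situation where the theorem is actually used (Proposition \ref{bracket}), the fields inside one normal product sit at the same point $z$ and generally have nonzero pairing (e.g.\ $\langle a_1,a_2^*\rangle=(a_1|a_2)$), so $\underbrace{u_1 u_i}$ is a divergent quantity $\langle u_1,u_i\rangle/(z-z)$ that cannot be carried along as an honest term and cancelled later. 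The standard proof avoids ever producing these terms: with $C=:u_2\cdots u_m:$ one writes $:u_1 C:\,=u_1^{-}C+Cu_1^{+}$, hence $:u_1\cdots u_m::v_1\cdots v_n:=u_1^{-}C:v_1\cdots v_n:+Cu_1^{+}:v_1\cdots v_n:$; only $u_1^{+}$ is commuted past the $v_j$'s (never past the $u_i$'s), which yields the $\underbrace{u_1 v_j}$ terms, and the leftover $u_1^{-}X+Xu_1^{+}$ reassembles into $:u_1 X:$ precisely because the contractions appearing in $X$ are central scalars --- a hypothesis your sketch never invokes but which is essential to the recombination. With the inductive step reorganized along these lines, and the inductive hypothesis applied to $C:v_1\cdots v_n:$ and to each $C:v_1\cdots\hat v_j\cdots v_n:$, your outline becomes the correct standard argument.
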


\begin{proposition} \label{OPE} The basic operator product expansions are:
for $x, y\in{\mathcal C}$ we have
$$
\underbrace{u(z)v(w)}=\frac{\langle u, v\rangle}{z-w} \ \ \text{for}
\ \ \mathbb Z+1/2, \quad \langle u,
v\rangle\frac{(z+w)(zw)^{-1/2}}{2(z-w)} \ \ \mbox{for} \ \ \mathbb
Z.
$$
In particular we have for $a, b\in P_{\mathbb C}$ and $\mathbb Z+1/2$
\begin{align*}
\underbrace{a(z)b(w)}&=\underbrace{a^*(z)b^*(w)}=0,\\
\underbrace{a(z)b^*(w)}&=\frac{\langle a, b^*\rangle}{z-w}, \quad
\underbrace{a^*(z)b(w)}=\frac{\langle a^*, b\rangle }{z-w},
\end{align*}
\end{proposition}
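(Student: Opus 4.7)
The plan is to evaluate $\underbrace{u(z)v(w)}=u(z)v(w)-:u(z)v(w):$ term by term in the mode expansion. By the three-case definition of the normal ordering, the only modes surviving the subtraction are those with $m>0$, contributing the full commutator $[u(m),v(n)]$, together with $\tfrac12[u(0),v(n)]$ at $m=0$ when $0\in\Za$. The Weyl relation $[u(k),v(l)]=\langle u,v\rangle\delta_{k,-l}$ then collapses the double sum to a single geometric series in $w/z$, which I expand in the region $|z|>|w|$ as dictated by the convention $\iota_{z,w}$ from Section 2.

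For $\Za=\mathbb Z+1/2$ there is no zero mode, so only $m=k+1/2$ with $k\ge0$ contributes, giving
\begin{equation*}
\underbrace{u(z)v(w)}
=\langle u,v\rangle\!\!\sum_{m\in\mathbb Z+1/2,\,m>0}\!\!z^{-m-1/2}w^{m-1/2}
=\langle u,v\rangle\sum_{k\ge0}z^{-k-1}w^{k}
=\frac{\langle u,v\rangle}{z-w}.
\end{equation*}
For $\Za=\mathbb Z$ one obtains the sum $\sum_{m\ge1}z^{-m-1/2}w^{m-1/2}=(zw)^{-1/2}w/(z-w)$ together with the symmetrized zero-mode term $\tfrac12\langle u,v\rangle(zw)^{-1/2}$, and
\begin{equation*}
\underbrace{u(z)v(w)}
=\langle u,v\rangle(zw)^{-1/2}\Bigl(\frac{w}{z-w}+\frac12\Bigr)
=\langle u,v\rangle\,\frac{(z+w)(zw)^{-1/2}}{2(z-w)}.
\end{equation*}
The four explicit identities on $P_{\mathbb C}$ follow at once by substituting $\langle a,b\rangle=\langle a^*,b^*\rangle=0$ from the isotropy of $\mathcal C_1,\mathcal C_2$, and $\langle a,b^*\rangle,\langle a^*,b\rangle=\pm(a|b)$ from the antisymmetric form defining $\mathcal C$.

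The entire argument is bookkeeping; the only subtle point is the symmetrized zero mode in the integer-moded case, where the factor $\tfrac12$ inside the parentheses is precisely what combines with $w/(z-w)$ to produce the symmetric kernel $(z+w)/(2(z-w))$. Dropping or mishandling this term is the only place where a genuine error could creep in, but it does not constitute a real obstacle.
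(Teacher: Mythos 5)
Your proposal is correct and follows essentially the same route as the paper: both compute the contraction mode by mode, observe that only the positive modes (plus half the zero mode when $\Za=\mathbb Z$) survive the subtraction of the normal ordering, reduce the double sum to a geometric series via the Weyl relation, and expand in the region $|z|>|w|$ to obtain $1/(z-w)$ or the symmetrized kernel $(z+w)(zw)^{-1/2}/(2(z-w))$. The special cases on $P_{\mathbb C}$ follow from the isotropy of $\mathcal C_1$ and $\mathcal C_2$ exactly as you state.
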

\begin{proof}
In fact one has
\begin{align*}
\underbrace{a(z)b^*(w)}&=\frac{\langle a,
b^*\rangle}2z^{-1/2}w^{-1/2}\delta_{\Za, \mathbb Z}+
\sum_{n\in\Za, 0<m\in\Zp}[a(m), b^*(n)]z^{-m-1/2}w^{-n-1/2}\\
&=\langle a, b^*\rangle(\frac12z^{-1/2}w^{-1/2}\delta_{\Za, \mathbb Z}+\sum_{0<m\in\Zp}z^{-m-1/2}w^{m-1/2})\\
&=\begin{cases}\frac{\langle a, b^*\rangle }{z-w} & \Za={\mathbb
Z}+1/2\\
\langle a, b^*\rangle \frac{(z+w)(zw)^{-1/2}}{2(z-w)} & \Za={\mathbb
Z}
\end{cases}.
\end{align*}
The other OPEs are proved in the same manner.
\end{proof}
Note that in both cases ($\mathbb Z$ and $\mathbb Z+1/2$) we have
the following result.
\begin{proposition} \label{commutation} The bosonic fields satisfy the following commutation relations:
\begin{equation*}
[u(z), v(w)]=\langle u, v\rangle\delta(z-w)
\end{equation*}
\end{proposition}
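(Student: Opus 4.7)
The plan is to derive the commutator by combining the OPE expressions from Proposition \ref{OPE} with the standard formal calculus identity $\delta(z-w)=\iota_{z,w}(z-w)^{-1}+\iota_{w,z}(w-z)^{-1}$. Since the contraction $\underbrace{u(z)v(w)}$ is by definition the singular part $u(z)v(w)-:u(z)v(w):$ and the normal ordering is symmetric in its arguments, we have
\begin{equation*}
[u(z),v(w)] \,=\, u(z)v(w)-v(w)u(z) \,=\, \underbrace{u(z)v(w)}-\underbrace{v(w)u(z)},
\end{equation*}
so the commutator reduces to the ``jump'' of the two rational expressions across the pole at $z=w$.

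In the $\mathbb{Z}+1/2$ case, Proposition \ref{OPE} gives $\underbrace{u(z)v(w)}=\langle u,v\rangle\iota_{z,w}(z-w)^{-1}$ and $\underbrace{v(w)u(z)}=\langle v,u\rangle\iota_{w,z}(w-z)^{-1}$. Using antisymmetry $\langle v,u\rangle=-\langle u,v\rangle$ of the form on $\mathcal{C}$, the difference collapses directly to $\langle u,v\rangle\bigl(\iota_{z,w}(z-w)^{-1}+\iota_{w,z}(w-z)^{-1}\bigr)=\langle u,v\rangle\delta(z-w)$, as required.

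For the $\Za=\mathbb Z$ case the same argument produces $\langle u,v\rangle\tfrac{(z+w)(zw)^{-1/2}}{2}\delta(z-w)$, so the remaining step is to show that the prefactor acts as the identity on $\delta(z-w)$. The shortest path is to verify this mode-by-mode: expanding $[u(z),v(w)]=\sum_{n,m}[u(n),v(m)]z^{-n-1/2}w^{-m-1/2}$ and applying the Weyl relation $[u(n),v(m)]=\langle u,v\rangle\delta_{n,-m}$ collapses the double sum to $\langle u,v\rangle\sum_{n}z^{-n-1/2}w^{n-1/2}$, which is what one means by $\delta(z-w)$ in the half-integer-power formal series setting. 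Equivalently, at the level of formal distributions, $z\,\delta(z-w)=w\,\delta(z-w)$ forces $z^{1/2}w^{-1/2}\delta(z-w)=\delta(z-w)$, so the extra factor $\tfrac{(z+w)(zw)^{-1/2}}{2}=\tfrac12\bigl((z/w)^{1/2}+(w/z)^{1/2}\bigr)$ multiplied against $\delta(z-w)$ simplifies to $1$.

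I expect the only mild obstacle to be precisely this last bookkeeping issue of matching the half-integer powers appearing in the $\mathbb{Z}$-indexed fields with the integer-powered $\delta(z-w)$; it is conceptual rather than computational, and is resolved either by recording the commutator mode-by-mode as above, or by invoking the standard identity $f(z)\delta(z-w)=f(w)\delta(z-w)$ for regular $f$. The $\mathbb{Z}+1/2$ case is entirely formal once the OPE from Proposition \ref{OPE} is available.
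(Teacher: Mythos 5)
Your proposal is correct and follows essentially the same route as the paper: both use the symmetry $:u(z)v(w):\,=\,:v(w)u(z):$ to reduce the commutator to the difference of the two contractions from Proposition \ref{OPE}, which assembles into $\langle u,v\rangle\delta(z-w)$. Your extra care with the $\Za=\mathbb Z$ case --- showing via the mode expansion (or via $f(z)\delta(z-w)=f(w)\delta(z-w)$) that the prefactor $\tfrac{(z+w)(zw)^{-1/2}}{2}$ acts as the identity on the diagonal distribution --- is a detail the paper leaves implicit, but it does not change the argument.
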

\begin{proof} Using the fact that $:u(z)v(w):=:v(w)u(z):$
it follows from Proposition \ref{OPE} that
\begin{align*}
[u(z), v(w)]&=u(z)v(w)-v(w)u(z)\\
&=\langle u, v\rangle\delta(z-w).
\end{align*}
\end{proof}

\begin{proposition} \label{bracket} The commutators among normal order products are given by:
\begin{align*}
&[:a_1(z)a_2^*(z):,:b_1(w)b_2^*(w):]=\langle a_1, b_2^*\rangle
:a_2^*(z)b_1(z):\delta(z-w)\\ & \qquad + \langle a_2^*, b_1\rangle
:a_1(z)b_2^*(z):\delta(z-w)
+\langle a_1, b_2^*\rangle \langle a_2^*, b_1\rangle\partial_w\delta(z-w).\\
\end{align*}
\end{proposition}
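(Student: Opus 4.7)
The plan is a routine application of Wick's theorem combined with the explicit OPEs of Proposition~\ref{OPE}. I would first expand both products $:a_1(z)a_2^*(z)::b_1(w)b_2^*(w):$ and $:b_1(w)b_2^*(w)::a_1(z)a_2^*(z):$ via Theorem~\ref{Wick}. The fully normal-ordered quartic $:a_1(z)a_2^*(z)b_1(w)b_2^*(w):$ is common to both expansions and drops out of the commutator. Since $\langle a,b\rangle=0$ for $a,b\in\mathcal C_1$ and likewise for $\mathcal C_2$, the only contractions that contribute are the cross-type pairings $\underbrace{a_1(z)b_2^*(w)}$ and $\underbrace{a_2^*(z)b_1(w)}$ in the first ordering, and the corresponding $w,z$-swapped pairings in the second.

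For the single-contraction contributions, each coefficient in the first ordering is expanded via $\iota_{z,w}(z-w)^{-1}$, while in the second ordering the coefficient is expanded via $\iota_{w,z}(w-z)^{-1}$ and carries an additional sign from the antisymmetry $\langle b_1,a_2^*\rangle=-\langle a_2^*,b_1\rangle$ (and similarly for the other pair). Using the symmetry $:u(z)v(w):\,=\,:v(w)u(z):$, the single contraction factors match up in pairs with identical normal-ordered kernels, and their combined rational kernel telescopes via the identity
\[
\iota_{z,w}\frac{1}{z-w}+\iota_{w,z}\frac{1}{w-z}=\delta(z-w).
\]
Then the identity $g(z,w)\delta(z-w)=g(z,z)\delta(z-w)$ lets me evaluate the surviving normal orders at coincident arguments, producing the first two terms of the claim.

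For the double-contraction contribution, the product $\underbrace{a_1(z)b_2^*(w)}\,\underbrace{a_2^*(z)b_1(w)}$ yields $\langle a_1,b_2^*\rangle\langle a_2^*,b_1\rangle\,\iota_{z,w}(z-w)^{-2}$ and its mirror yields the same coefficient times $\iota_{w,z}(w-z)^{-2}$; their difference is $\partial_w\delta(z-w)$, producing the last term.

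The only delicate point is the integer-moded case, where every OPE carries the extra prefactor $f(z,w)=(z+w)(zw)^{-1/2}/2$. I would verify that $f(w,w)=1$, so the $\delta$-function terms are unaffected, and that $f(z,w)^2\partial_w\delta(z-w)=\partial_w\delta(z-w)$. The latter follows from the distributional identity
\[
g(z,w)\,\partial_w\delta(z-w)=g(w,w)\,\partial_w\delta(z-w)+(\partial_z g)(w,w)\,\delta(z-w),
\]
applied to $g=f^2$, combined with the direct computation $\partial_z f(z,w)|_{z=w}=0$. This reconciliation of the two mode conventions is the one step that requires a brief but nontrivial calculation; the rest is bookkeeping.
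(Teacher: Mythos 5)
Your proposal is correct and follows essentially the same route as the paper: expand both orderings by Wick's theorem, cancel the quartic normal-ordered term, and convert the single and double contractions into $\delta(z-w)$ and $\partial_w\delta(z-w)$ via the $\iota_{z,w}$/$\iota_{w,z}$ expansions. Your explicit verification that the integer-moded prefactor $f(z,w)=(z+w)(zw)^{-1/2}/2$ satisfies $f(w,w)=1$ and $\partial_zf(z,w)|_{z=w}=0$ supplies the detail the paper compresses into ``the case of $\mathbb Z$ is shown similarly.''
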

\begin{proof} By Wick's Theorem \ref{Wick} and
Proposition \ref{OPE}, we have for $\mathbb Z+1/2$
\begin{align*}
&:a_1(z)a_2^*(z)::b_1(w)b_2^*(w):=:a_1(z)a_2^*(z)b_1(z)b_2^*(w):\\
&\qquad\qquad +\frac{\langle a_1, b_2^*\rangle
}{z-w}:a_2^*(z)b_1(w):+ \frac{\langle a_2^*, b_1\rangle
}{z-w}:a_1(z)b_2^*(w):+\frac{\langle a_1, b_2^*\rangle\langle a_2^*,
b_1\rangle }{(z-w)^2}.
\end{align*}
Hence the result follows. The case of $\mathbb Z$ is shown
similarly.
\end{proof}

The (anti-symmetric) inner product of the underlying Lie algebra can
be extended to that of the linear factors as follows:
$$\langle:r_1r_2:, :s_1s_2:\rangle= -\langle r_1, s_1\rangle\langle r_2, s_2
\rangle
 +\langle r_1, s_2\rangle\langle r_2, s_1\rangle.$$

 In the following we will describe the bosonic realization of the
toroidal Lie algebra of classical types.

(i) Type $A_{n-1}$ ($n\geq 2$). The roots of the finite dimensional
Lie algebra are $\vep_i-\vep_j$ $(1\leq i\neq j\leq n)$. We
associate to each finite root and imaginary root $\delta$ the
following field operators. The level $\not c=-1$.
\begin{align*}
X(\vep_i-\vep_j, z)&=:\vep_i(z)\vep_j^*(z):,
\quad h_{\vep_i-\vep_j}(z)=:\vep_i(z)\vep_i^*(z):-:\vep_j(z)\ep_j^*(z):\\
X(\pm\alpha_i, z)&=X(\pm(\vep_i-\vep_{i+1}), z), \ \
\alpha_i(z)=h_{\vep_i-\vep_{i+1}}(z), \\
X(\alpha_0, z)&=:\vep_n(z)\beta^*(z):, \quad X(-\alpha_0,
z)=:\beta(z)\vep_n^*(z):,  \\
a_{0}(z)&=:\vep_n(z)\vep_n^*(z):-:\beta(z)\beta^*(z):.\\
\end{align*}
 Using Proposition \ref{bracket} we get for $i\neq j$
\begin{align*}
[X(\vep_i-\vep_j, z), X(\vep_k-\vep_l, w)]
&=\delta_{jk}X(\vep_i-\vep_l, z)\delta(z-w)\\
-\delta_{li}&X(\vep_k-\vep_j, z)\delta(z-w)-\delta_{jk}\delta_{li}\partial_w\delta(z-w),\\
[X(\vep_i-\vep_j, z),X(\vep_j-\vep_i, w)]&=
\delta_{ij}(h_{\vep_i-\vep_j}(z)\delta(z-w)-\partial_w\delta(z-w)),\\
[h_{\vep_i-\vep_j}(z), X(\vep_k-\vep_l,
w)]=&(\vep_i-\vep_j|\vep_k-\vep_l) X(\vep_j-\vep_k, w)\delta(z-w),
\end{align*}
where $\vep_i, \vep_j$ can be $\beta$ (recall
$(\beta|\vep_i)=\delta_{1i}$). These commutation relations show that
the construction provides a level $-1$ realization of the affine Lie
algebra of type $A$. Moreover they also provide a representation for
the toroidal Lie algebra, as the commutation relations involving
$\alpha_0$ are also similar to the above. In fact,
\begin{align*}
[X(\alpha_0, z), X(\alpha_1, w)]&=:\vep_n(z)\vep^*_2(z):\delta(z-w),
\quad [X(\alpha_0, z), X(-\alpha_1,
w)]=0,\\
[X(\alpha_0, z), X(\alpha_n,
w)]&=-:\beta^*(z)\vep_{n-1}(z):\delta(z-w), \quad [X(\alpha_0, z),
X(-\alpha_n, w)]=0.
\end{align*}

(ii) Type $C_n$ ($n\geq 2$). The finite roots are
$\alpha_{ij}=\frac1{\sqrt{2}}(\vep_i-\vep_j)$ $(1\leq i\neq j\leq
n)$, $\pm\beta_{ij}= \pm\frac1{\sqrt{2}}(\vep_i+\vep_j)$ $(1\leq
i\neq j\leq n)$. Note that $-\alpha_{ij}=\alpha_{ji}$. We realize
the toroidal Lie algebra by the following field operators and the
level is $\not c=-1/2$.
\begin{align*}
X(\alpha_{ij}, z)&=:\vep_i(z)\vep_j^*(z):, \quad
h_{\vep_i-\vep_j}(z)=:\vep_i(z)\vep_i^*(z):-:\vep_j(z)\vep_j^*(z):,\\
X(\beta_{ij}, z)&=:\vep_i(z)\vep_j(z):, \qquad
X(-\beta_{ij}, z)=-:\vep_i^*(z)\vep_j^*(z):,\\
h_{\vep_i+\vep_j}(z)&=:\vep_i(z)\vep_i^*(z):+:\vep_j(z)\vep_j^*(z):,\\
X(\pm\alpha_i, z)&=X(\pm\alpha_{i, i+1}, z), \ \
\alpha_i(z)=(1/2)h_{\vep_i-\vep_{i+1}}(z),\\
X(\pm\alpha_n, z)&=(1/2) X(\pm\beta_{nn}, z),
\quad \alpha_n(z)=h_{2\vep_n}(z),\\
X(\alpha_0, z)&=(1/2):\beta^*(z)\vep_1^*(z):, \quad X(-\alpha_0,
z)=(1/2):\beta(z)\vep_1(z): , \\
\alpha_{0}(z)&=(1/4)(:\vep_1(z)\vep_1^*(z):+:\beta(z)\beta^*(z):
+:\beta^*(z)\vep_1(z):+:\vep_1^*(z)\beta(z):).\\
\end{align*}
Several commutation relations are exactly the same as type A. Some
new ones are obtained by Proposition \ref{bracket} as follows.
\begin{align*}
[X(\beta_{ij},& z), X(-\beta_{kl}, w)]\\
&=(\delta_{ik}X(\alpha_{jl}, z)+\delta_{il}X(\a_{jk},
z)+\delta_{jk}X(\alpha_{jl}, z)\\
&+\delta_{jl}X(\alpha_{ik},
z))\delta(z-w)-(\delta_{ik}\delta_{jl}+\delta_{ik}\delta_{jl})\partial_w\delta(z-w),\\
[X(\alpha_{ij},& z), X(\beta_{kl}, w)]=(\delta_{jk}X(\beta_{il},
z)+\delta_{jl}X(\beta_{ik}, z))\delta(z-w),\\
[X(\alpha_{ij},& z), X(-\beta_{kl}, w)]=-(\delta_{ik}X(-\beta_{jl},
z)+\delta_{il}X(-\beta_{jk}, z))\delta(z-w).
\end{align*}

(iii) Type $D_n$ ($n\geq 4$). The finite roots are $\pm \vep_i\pm
\vep_j$ ($1\leq i\neq j\leq n$). We realize the toroidal Lie algebra
by the following field operators. The level $\not c=-2$. For $i<j$
we define
\begin{align*}
X(\vep_i-\vep_j, z)&=:\vep_i(z)\vep_j^*(z):-
:\vep_{\overline j}(z)\vep_{\overline i}^*(z):, \\
h_{\vep_i-\vep_j}(z)&=:\vep_i(z)\vep_i^*(z):-:\vep_j(z)\vep_j^*(z):
 -:\vep_{\overline i}(z)\vep_{\overline i}^*(z):+
:\vep_{\overline j}(z)\vep_{\overline j}^*(z):,\\
X(\vep_i+\vep_j, z)&=:\vep_i(z)\vep_{\overline j}^*(z):-
:\vep_j(z)\vep_{\overline i}^*(z):,\\
X(-\vep_i-\vep_j, z)&=:\vep_{\overline j}(z)\vep_i^*(z):
-:\vep_{\overline i}(z)\vep_{j}^*(z):, \\
h_{\vep_i+\vep_j}(z)&=:\vep_i(z)\vep_i^*(z):+:\vep_j(z)\vep_j^*(z):-:\vep_{\overline
i}(z)\vep_{\overline i}^*(z):-
:\vep_{\overline j}(z)\vep_{\overline j}^*(z):,\\
X(\pm \alpha_i, z)&=X(\pm(\vep_{i}-\vep_{i+1}), z), \ \ \alpha_i(z)=h_{\vep_i-\vep_{i+1}}(z),\\
X(\pm \alpha_n, z)&=X(\pm(\vep_{n-1}+\vep_n), z), \ \
\alpha_n(z)=h_{\vep_{n-1}-\vep_{n}}(z),\\
 X(\alpha_0,
z)&=:\vep_{\overline 2}(z)\beta^*(z):
-:\overline{\beta}(z)\vep_{2}^*(z):,\\
X(-\alpha_0,z)&=:\beta^*(z)\vep_{\overline 2}^*(z):
-:\vep_{2}(z)\overline{\beta}(z):,\\
\alpha_{0}(z)&=:\vep_{\overline 2}(z)\vep_{\overline 2}^*(z):
-:\vep_{2}(z)\vep_{2}^*(z):+:\overline{\beta}(z)\overline{\beta}^*(z):
-:{\beta}^*(z){\beta}(z):.
\end{align*}

(iv) Type $B_n$($n\geq 2$). The finite roots are $\pm \vep_i\pm
\vep_j$ $(1\leq i\neq j\leq n)$ and $\pm \vep_i$ $(1\leq i\leq n)$.
The level $\not c=-2$. We realize the toroidal Lie algebra by the
following field operators. The field operators for the roots
$\pm\vep_i\pm\vep_j$ ($i\neq j$) are the same as in type $D_n$,
though we need to add the bosonic fields $\vep_{\overline{n+1}}(z)$
and $\vep_{\overline{n+1}}^*(z)$ in the realization.
\begin{align*}
X(\vep_i, z)&=:\vep_i(z)\vep_{\overline{n+1}}^*(z):-
:\vep_{\overline{n+1}}(z)\vep_{\overline i}^*(z):,\\
X(-\vep_i, z)&=
:\vep_{\overline{n+1}}(z)\vep_i^*(z):
-:\vep_{\overline i}(z)\vep_{\overline{n+1}}^*(z):,\\
h_{\vep_i}(z)&=:\vep_i(z)\vep_i^*(z):-:\vep_{\overline
i}(z)\vep_{\overline i}^*(z):, \\
X(\pm\alpha_n, z)&=X(\pm\vep_n, z), \quad \alpha_n(z)=\vep_n(z),
\end{align*}
and $X(\pm\alpha_i, x)$, $\alpha_i(z)$ ($0\leq i\leq n-1$) are given
by the same formulae as in type $D_n$.  Using the same analysis we
find that
\begin{align*}
[X(\vep_i, z), X(-\vep_j, w)]&=X(\vep_i-\vep_j,
z)\delta(z-w)-2\delta_{ij}\partial_w\delta(z-w),\\
[X(\vep_i, z), X(\vep_j, w)]&=-X(\vep_i+\vep_j, z)\delta(z-w).
\end{align*}

\begin{theorem} The previous constructions
give rise realizations of classical toroidal Lie algebras of types
$A_{n-1} (n\geq 2), C_n (n\geq 2), D_n (n\geq 4), B_n (n\geq 3)$
 with level $-1, -1/2, -2, -2$ respectively.
\end{theorem}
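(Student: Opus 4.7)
The plan is to verify, for each type, that the field operators of Section 3 satisfy the MRY relations (R0)--(R4) and the Serre relations (S1)--(S3), with $\not c$ acting as the stated scalar. By Proposition \ref{commutation} the commutator $[u(z),v(w)]=\langle u,v\rangle\delta(z-w)$ is the same in the two realizations $\Za=\mathbb Z$ and $\Za=\mathbb Z+1/2$, so one argument suffices for both. Relation (R0) is automatic since $\not c$ is central in the Weyl algebra.

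The workhorse is Proposition \ref{bracket}: applied to any two generators written as normal-ordered quadratics $:a_1(z)a_2^*(z):$ and $:b_1(w)b_2^*(w):$, it yields a $\delta(z-w)$-term and a $\partial_w\delta(z-w)$-term. For (R1) the former vanishes and the latter reads off as $(\a_i|\a_j)\partial_w\delta(z-w)\not c$ with $\not c$ equal to the claimed level. For (R2) only the $\delta$-term survives, and it reproduces $\pm(\a_i|\a_j)X(\pm\a_j,w)$ once the values $(\a_i|\a_j)=d_ia_{ij}$ recorded in Section 2 are substituted. For (R3) both terms are present, and the coefficient of $\partial_w\delta(z-w)$ comes out to $2/(\a_i|\a_i)$ times $-1,-1/2,-2,-2$ in types $A_{n-1}, C_n, D_n, B_n$ respectively; this is how the level is identified. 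Relation (R4) is immediate from $\underbrace{a(z)b(w)}=\underbrace{a^*(z)b^*(w)}=0$ in Proposition \ref{OPE}: the pair $[X(\pm\a_i,z),X(\pm\a_i,w)]$ has no contractable letters, so by Wick's theorem it equals a difference of two normal products that coincide after relabelling. The bracket verifications involving $\a_0$ reduce to the identities $(\beta|\beta)=1$ and $(\beta|\vep_i)=\delta_{1i}$ (respectively their barred analogues in types $B, D$), which are noted just before the Weyl algebra is introduced, together with $(\a_0|\a_0)=(\a_{max}|\a_{max})$ and $(\a_0|\a_j)=-(\a_{max}|\a_j)$ for $j\ge 1$.

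The hard part is the Serre relations. (S1) is immediate when $a_{ij}=0$ because the corresponding operators involve disjoint Weyl letters and hence commute. For (S2) and (S3) I would iteratively expand the $\mathrm{ad}$-action via Proposition \ref{bracket}: each step either terminates in a $\delta$-function or produces a strictly smaller normal-ordered quadratic whose next contraction is still controlled by Proposition \ref{OPE}. After the expected number of steps the remaining expression is manifestly antisymmetric in a pair of the variables $z_i$ while supported on a symmetric object, and so vanishes. The most delicate case is (S3) in type $C_n$, where $a_{n-1,n}=-2$ forces three adjoint actions of the short-root vector $X(\pm\a_{n-1},z)$ on $X(\pm\a_n,z)$, and likewise at the new node $\a_0$ whose brackets with $\a_1$ and $\a_n$ involve both $\vep_1$ and $\beta$; the bosonic symmetry $:\vep_i(z)\vep_j(z):\,=\,:\vep_j(z)\vep_i(z):$ together with the factor $1/2$ built into $X(\pm\a_n,z)$ must both be tracked before the triple bracket is seen to collapse. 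Once this accounting is complete in all four types, the four constructions are confirmed to realize $T(A_{n-1}), T(C_n), T(D_n)$, and $T(B_n)$ at the claimed levels.
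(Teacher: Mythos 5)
Your overall strategy is the same as the paper's: verify the MRY relations (R0)--(R4) and the Serre relations directly from Proposition \ref{bracket} and Wick's theorem, reading the level off the coefficient of $\partial_w\delta(z-w)$ in (R3) (the paper streamlines types $A_{n-1}$ and $C_n$ with the $d_i$'s, getting $\not c=-d_i$, and treats $B_n,D_n$ separately). However, two of the mechanisms you invoke would fail as written. For (S1), the claim that orthogonal simple roots give operators with ``disjoint Weyl letters'' is only true in types $A_{n-1}$ and $C_n$, where each root vector is a single quadratic. In types $D_n$ and $B_n$ each root vector is a \emph{difference} of two quadratics, and orthogonal pairs can overlap: with $a_{n-1,n}=0$ in type $D_n$, $X(\a_{n-1},z)=\,:\vep_{n-1}(z)\vep_n^*(z):-:\vep_{\overline n}(z)\vep_{\overline{n-1}}^*(z):$ and $X(\a_n,w)=\,:\vep_{n-1}(w)\vep_{\overline n}^*(w):-:\vep_n(w)\vep_{\overline{n-1}}^*(w):$ have the nonzero contractions $\langle \vep_{\overline n}^*,\vep_{\overline n}\rangle=1$ and $\langle \vep_n,\vep_n^*\rangle=-1$; each cross-term contributes $\pm:\vep_{n-1}(z)\vep_{\overline{n-1}}^*(z):\delta(z-w)$ and the bracket vanishes only after these cancel. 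The same cancellation (not disjointness) is exactly what the paper's sample computation $[X(\a_0,z),X(-\a_1,w)]=0$ in type $B_n$ exhibits, so for types $B,D$ you must actually expand all four cross-terms via Proposition \ref{bracket} and watch them cancel.

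The second gap is your vanishing mechanism for (S2)/(S3). Once the innermost bracket is taken, a factor $\delta(z_k-w)$ splits off and the residual operator carries no antisymmetry in the remaining spectral parameters, so the argument ``antisymmetric in a pair of the $z_i$ supported on a symmetric object'' has nothing to bite on. The mechanism the paper actually uses (see its two worked examples in type $C_n$) is a weight argument made concrete by the contraction rules: after the prescribed number of adjoint actions the surviving quadratic sits at weight $2\a_i+\a_j$ (resp.\ $3\a_i+\a_j$), which is not a root, and the terminal bracket dies because no contractions remain --- e.g.\ $[:\beta^*(z)\vep_1^*(z):,\ :\beta^*(z)\vep_2^*(z):+:\vep_1^*(z)\vep_2^*(z):]=0$ since starred letters never contract with starred letters ($\underbrace{a^*(z)b^*(w)}=0$ by Proposition \ref{OPE}). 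Replacing your antisymmetry step with this ``no remaining contractions at a non-root weight'' argument, and repairing the (S1) step for types $B,D$, brings your proof in line with the paper's.
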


\begin{proof} In the above construction we have outlined major
commutation relations of various types. One can use $d_i$ to combine
the argument for types $A_{n-1}$ and $C_n$. We will denote
$\alpha_{ij}=d_i^{-1/2}(\vep_i-\vep_j)=-\alpha_{ji}$ and
$\beta_{ij}=d_i^{-1/2}(\vep_i+\vep_j)$ for $i\neq j$, then both
types have the finite positive simple $\alpha_{i, i+1}$ plus
$\beta_{n,n}$ in type $C_n$. First we note that for $i\neq j, k\neq
l$
\begin{align} \notag
[:\vep_i(z)&\vep_j^*(z):, :\vep_k(w)\vep_l^*(w):]=
(\delta_{jk}:\vep_i(z)\vep_{l}^*(z):   \\
&-\delta_{il}:\vep_{j}^*(z)\vep_{k}(z):)
\delta(z-w)-\delta_{il}\delta_{jk}\partial_w\delta(z-w).
\label{E:com-A}
\end{align}
When $i=l$, $j=k$ the identity reduces to
\begin{equation*}
[X(\a_{ij}, z), X(-\a_{ij},
w)]=d_i^{-1}(a_{ij}(z)\delta(z-w)-d_i\partial_w\delta(z-w))
\end{equation*}
which shows that the center $\not c=-d_i$ for type $A_{n-1}$ and
$C_n$. Here we also put
\begin{equation} \label{E:root-A}
a_{ij}(z)=d_i(:\vep_i(z)\vep_i^*(z):-:\vep_j(z)\vep_j^*(z):).
\end{equation}
Then the Heisenberg field operators are generated by the simple ones
$\a_i(z)=a_{i, i+1}(z)$. We then have by using (\ref{E:com-A})
\begin{align*}
[\a_i(z),\a_j(w)]&=-d_i^2(2\delta_{ij}-\delta_{i+1,j}-\delta_{i,j+1})\partial_w\delta(z-w)\\
&=-d_i(\a_i | \a_j)\partial_w\delta(z-w).
\end{align*}
Also for any $k\neq l$ we find that (using \ref{E:com-A})
\begin{align} \notag
&[\alpha_i(z), X(\a_{kl}, w)]\\
&=[d_i(:\vep_i(z)\vep_i^*(z):-:\vep_{i+1}(z)\vep_{i+1}^*(z):),
:\vep_k(w)\vep_l^*(w)] \notag\\
&=(\a_i|\a_{kl}) X(\alpha_{kl}, z)\delta(z-w) \label{E:com-A2}
\end{align}
Serre relations are consequences of (\ref{E:com-A}) and
(\ref{E:com-A2}) for non-special nodes. It then follows that the
operators $X(\pm\a_i, z)$ generate a type $A$ subalgebra for $i=1,
\ldots, n-1\; (n-2 \; \text{for} \; A_{n-1})$. The verification for
special node $0$ is quite similar to the above provided that we view
$\beta$ as $\vep_1$ in type $C_n$, and $\beta$ as $\vep_n$ in type
$A$ when checking commutation relations.

In types $D_n$ and $B_n$, the basic commutation relations are
already given above using similar analysis to Eq. (\ref{E:com-A}).
For instance in type $B_n$ we have that
\begin{align*}
&[X(\a_0, z), X(-\a_1, w)]\\
&=[:\vep_0(z)\vep_1^*(z):-:\vep_{\overline 1}(z)\vep_{\overline
0}^*(z):, :\vep_2(w)\vep_1^*(w):-:\vep_{\overline
1}(w)\vep_{\overline 2}^*(w):]=0.
\end{align*}

Serre relations can be easily checked from the above commutation
relations plus properties of roots. We include one example to show
the method. In the case of type $C_n$ we have
\begin{align*}
[&X(\alpha_0, z_1), [X(\alpha_0, z_2),X(\alpha_1,
w)]]=\frac12[X(\alpha_0,z_1), [:\beta^*(z_2)\vep_1^*(z_2):,
:\vep_1(w)\vep_2^*(w)]]\\
&=\frac14[:\beta^*(z_2)\vep_1^*(z_2):,
:\beta^*(z_2)\vep_2^*(z_2):+:\vep_1^*(z_2)\vep_2^*(z_2):]\delta(z_2-w)=0.
\end{align*}
\begin{align*}
[&X(\alpha_1, z_1), [X(\alpha_1, z_2),[X(\alpha_1, z_3), X(\alpha_0,
w)]]]\\
&=\frac12[X(\alpha_1,z_1), [X(\alpha_1,
z_2),[:\vep_1(z_3)\vep_2^*(z_3), :\beta^*(w)\vep_1^*(w):
]]]\\
&=-\frac12[X(\alpha_1,z_1), [:\vep_1(z)\vep_2^*(z_2):
,[:\vep_2^*(w)\beta^*(w):+:\vep_2^*(w)\vep_1^*(w):]]\delta(z_3-w)\\
&=[X(\alpha_1,z_1),
:\vep_2^*(w)\vep_2^*(w):]\delta(z_2-w)\delta(z_3-w)=0.
\end{align*}
\end{proof}

The toroidal Lie algebra of type $A_1$ is also realized by the same
formulae as given above. However some commutation relations are
different from $A_{n-1} (n\geq 3)$.
\begin{align*}
[X(\alpha_0, z), &X(\alpha_1,
w)]=(:\vep_2(w)\vep_2^*(w):-:\beta^*(w)\vep_1(w):)\delta(z-w)-\partial_w\delta(z-w),\\
[X(-\alpha_0, z), &X(-\alpha_1,
w)]=(:\beta(w)\vep_1^*(w):-:\vep_2(w)\vep_2^*(w):)\delta(z-w)-
\partial_w\delta(z-w),\\
[X(\alpha_0, z_1),& [X(\alpha_0, z_2), X(\alpha_1, w)]]
=-2:\beta^*(w)\vep_2(w):\delta(z_1-w)\delta(z_2-w),\\
[X(\alpha_1, z_1), &[X(\alpha_1, z_2), X(\alpha_0, w)]]
=-2X(\alpha_1, w)\delta(z_1-w)\delta(z_2-w),
\end{align*}
where the last two relations imply the Serre relations immediately
for toroidal Lie algebra of type $A_1$. We remark that the
construction in type $B_2$ gives another representation of toroidal
Lie algebra of type $C_2$ at level $-2$.

\bibliographystyle{amsalpha}

\end{document}